\newtheorem{theorem}{Theorem}
\newtheorem{corollary}[theorem]{Corollary}
\newtheorem{lemma}[theorem]{Lemma}
\theoremstyle{definition}
\newtheorem{proposition}[theorem]{Proposition}
\colorlet{k}{green!10!orange!90!}
\title{\bf Recurrence and windings of two revolving random walks}
\author{Gianluca Bosi\thanks{University of Bologna, Bologna, Italy; gianluca.bosi4@unibo.it} \and Yiping Hu\thanks{University of Washington, Seattle, USA; huypken@uw.edu} \and Yuval Peres\thanks{Kent State University, Kent, OH, USA; yuval@yuvalperes.com}}
\date{\today}
\begin{document}

\maketitle

\begin{abstract}
\noindent We study the winding behavior of random walks on two oriented square lattices. One common feature of these walks is that they are bound to revolve clockwise.
We also obtain quantitative results of transience/recurrence for each walk.
\end{abstract}

\indent\textit{\small Keywords:} {\small winding, oriented lattices, transience/recurrence, Lyapunov function.}\\
\indent\textit{\small Mathematics Subject Classification 2010: }{\small Primary 60G50; Secondary 60J10.}

\section{Introduction}
\label{sec:intro}

\begin{figure}[h!]
\centering
\subfigure[Graph $\mathbb G_1$]
{\includegraphics[width= 2.2 in]{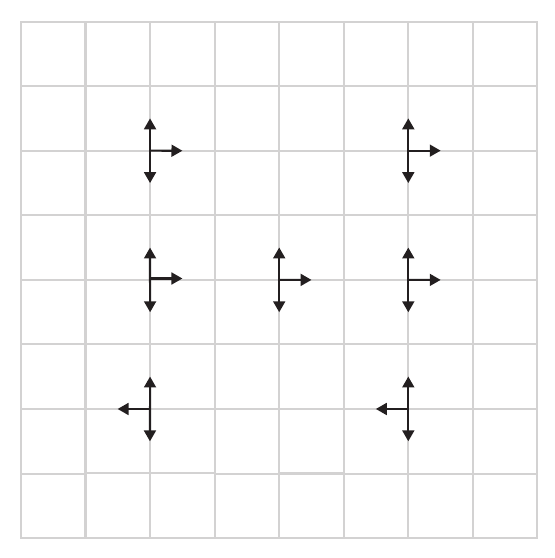}}
\subfigure[Graph $\mathbb G_2$]
{\includegraphics[width= 2.2 in]{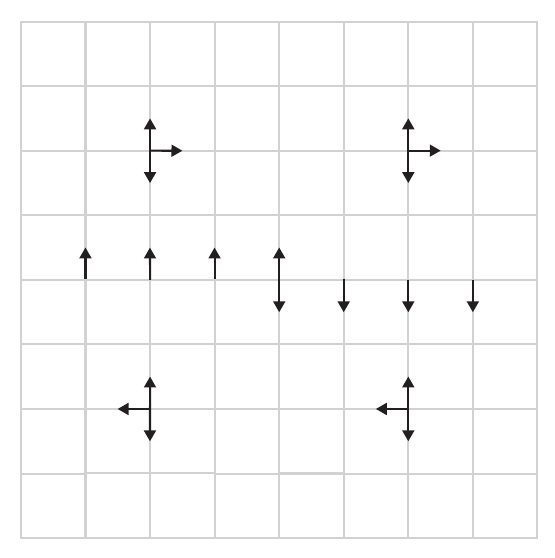}}
\caption{The graph $\mathbb G_1$ in figure (a) is transient, whereas the graph $\mathbb G_2$ in (b) is recurrent. The arrows indicate the orientation of the corresponding edges.}\label{graph}
\end{figure}
\begin{figure}[h!]
	\centering
	\subfigure[Random walk on $\mathbb G_1$]
	{\includegraphics[width= 2.3 in]{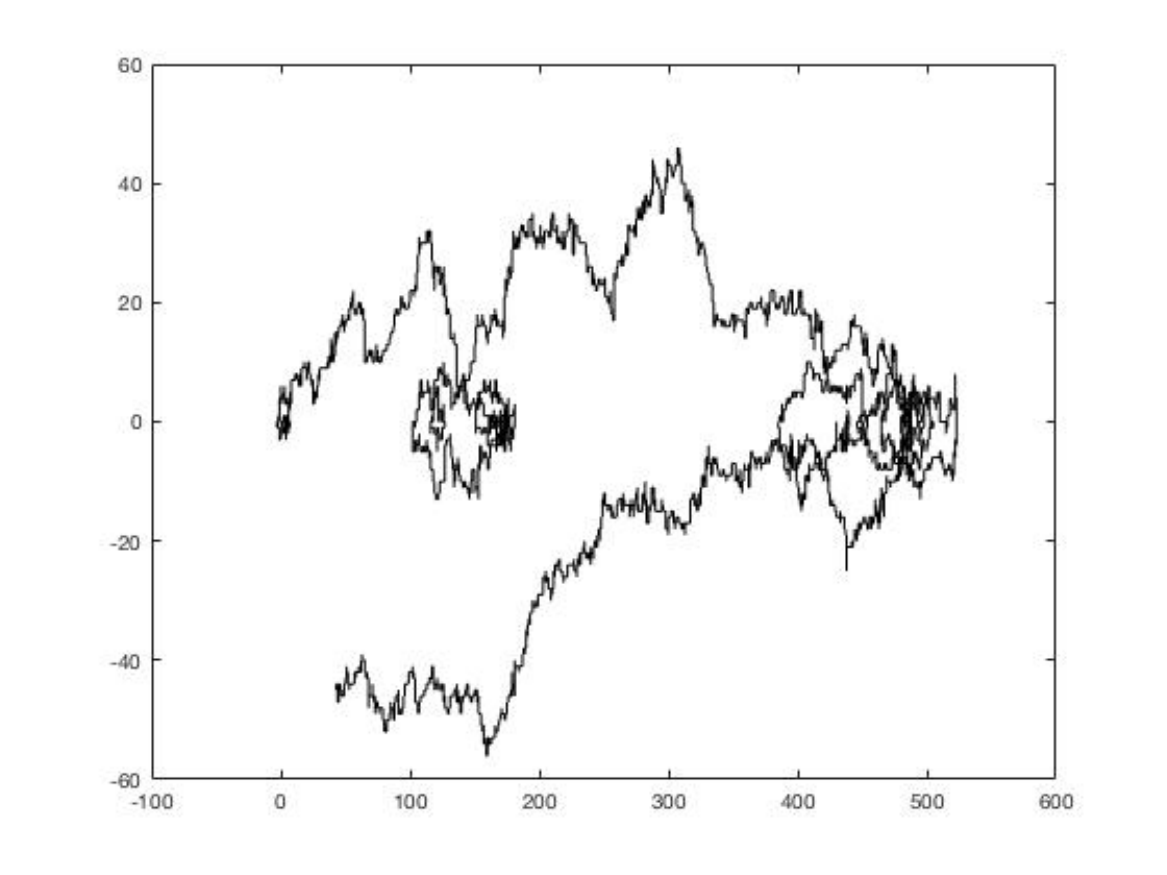}}
	\subfigure[Random walk on $\mathbb G_2$]
	{\includegraphics[width= 2.3 in]{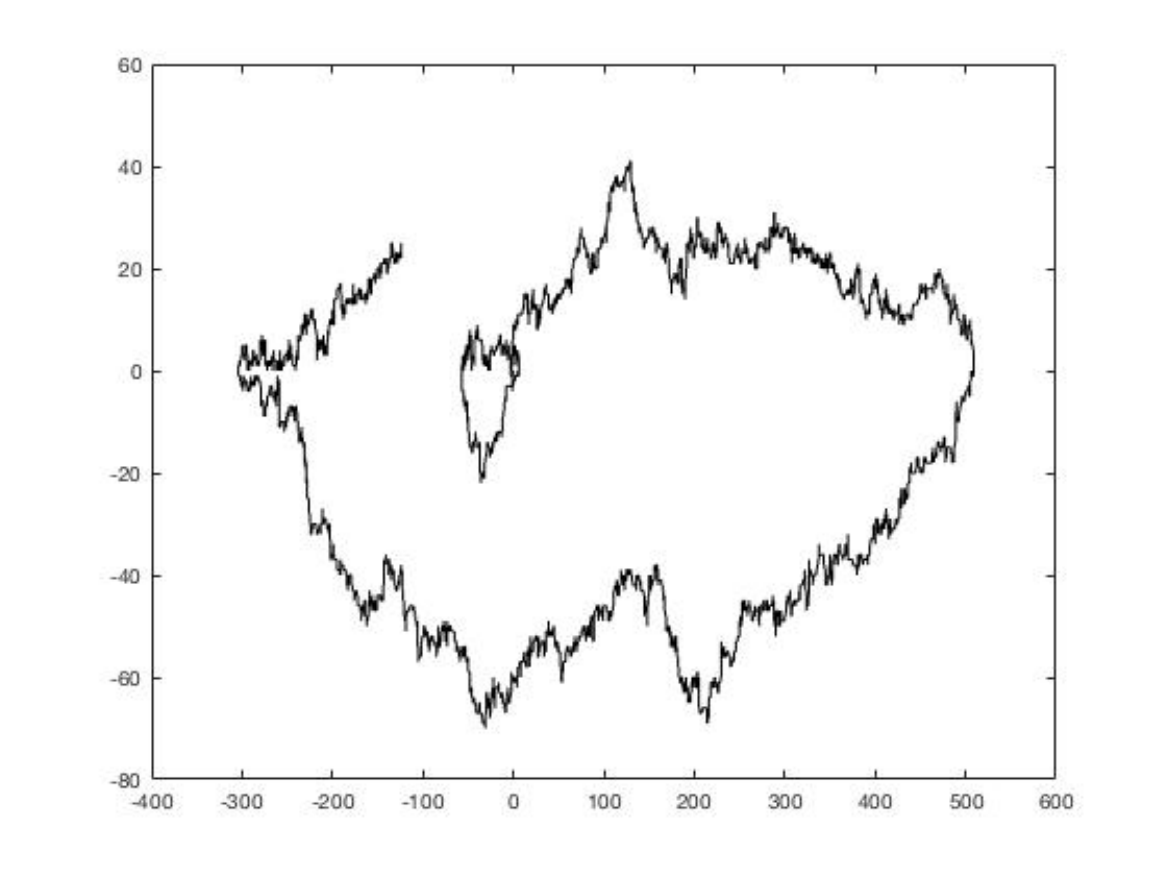}}
	\caption{Simulated trajectories of 5000 steps of the random walks on $\mathbb G_1$ and $\mathbb G_2$. Note the different scaling of the axes.}\label{graph3}
\end{figure}

Spitzer's celebrated theorem \cite{spitzer} states that the winding angle of a planar Brownian motion up to time $t$, rescaled by $\frac{1}{2}\log t$, has standard Cauchy as its limiting distribution. Since then, the winding behavior of planar processes has attracted the interest of many researchers. For the 2D simple random walk, B\'elisle \cite{Belisle} showed that its winding angle has the same scaling limit
as the big winding angle of a 2D Brownian motion, that is, the winding angle taking place outside a small ball centered at the origin. The latter is determined to be asymptotically hyperbolic secant with density $(1/2)\text{sech}(\pi u/2)$ in \cite{Messulam, Pitman}. We refer to \cite{Belisle91} for a detailed review on this topic. See also \cite{shi98, Schapira, Budd}.

In a different direction, the study of random walks on oriented lattices has intensified in the last few decades with motivations from many sources, including the Matheron-de Marsily model of transport in porous media \cite{matheron}, discretized gauge theories \cite{campanino, campanino08} and the theory of random walks in random media \cite{holmes17}. Various aspects of these models are studied (e.g. \cite{guillotin08functional, CGPS}, \cite{guillotin08, pene09, campanino14}) with many extensions \cite{devulder13, bremont20, ledger18} and connections to other models \cite{menshikov, Popov, pene20}. Except in special cases, random walks on oriented lattices are non-reversible and non-elliptic, which poses a unique set of challenges for analysis.

In this paper we study the winding behavior of the random walks on two oriented lattices $\mathbb G_1$ and $\mathbb G_2$, illustrated in Figure \ref{graph}. This is of particular interest, as both random walks are bound to revolve clockwise around the origin. After deducing the asymptotic laws of windings, we explain how these laws are closely related to more classical ones, such as the Spitzer's law. For each walk, we also derive quantitative results of transience or recurrence through our understanding of the windings.

\subsection{Models and results}

We give the precise definitions of $\mathbb G_1$ and $\mathbb G_2$. Define the directed graph $\mathbb G_1=(\mathbb{Z}^2, \mathbb E_1)$ such that a directed edge
$(v,w)=((v_1,v_2), (w_1,w_2))\in \mathbb E_1$ if and only if
$(w_1,w_2)=(v_1,v_2 \pm 1)$, or $(w_1,w_2)=(v_1+1,v_2)$ and $v_2=w_2\geq 0$, or $(w_1,w_2)=(v_1-1,v_2)$ and $v_2=w_2< 0$.
The graph $\mathbb G_2=(\mathbb V, \mathbb E_2)$ can be obtained with a slight modification of $\mathbb G_1$ by redefining only the orientations of the edges leading out from $x$-axis, that is, $((v_1,0), (w_1,w_2)) \in \mathbb E_2$ with $v_1=w_1$ and $w_2 = \pm 1$ if and only if $w_2=-1$ and $v_1=w_1>0$, or $w_2=1$ and $v_1=w_1<0$, or $w_2=\pm1$ and $v_1=w_1=0$.

Although $\mathbb G_1$ and $\mathbb G_2$ may look very similar, the random walks on them exhibit completely different behaviors. The graph $\mathbb G_1$ appeared for the first time in \cite{campanino}, where a proof of transience was given; the graph $\mathbb G_2$ was introduced later in \cite{menshikov,Popov} and the random walk on it turns out to be recurrent. The recurrence of $\mathbb G_2$ follows from
Corollary 4.8 in \cite{kemperman}, as pointed out in \cite[Prop. 7.8]{bremont}. Both random walks, considered at their successive returns to the $x$-axis, belong to the class of 1D oscillating random walks \cite{kemperman,menshikov,Popov},
with $\mathbb G_2$ critically recurrent in the class.

\vspace{.1in}

Run a simple random walk on $\mathbb{G}_1$. Let $\mathcal{N}_{\mathbb{G}_1}(n)$ be the number of windings around the origin up to the $n$-th step. See \eqref{def:NG1} for the formal definition. Our first result is a strong LLN for $\mathcal{N}_{\mathbb G_1}(n)$.

\begin{theorem} \label{LLNG1}
	\begin{equation*}
	\frac{\mathcal{N}_{\mathbb{G}_1}(n) }{\log n} \to \frac{1}{2\pi} \quad \text{a.s.}
	\end{equation*}
\end{theorem}

Note that this is in sharp contrast with the winding angle of classical 2D Brownian motion and random walks, which have nontrivial scaling limits.

In order to prove Theorem \ref{LLNG1},
we obtain a local limit theorem for the return probabilities on $\mathbb G_1$. More precisely, let $(M_i)_{i\ge0}$ be the simple random walk on $\mathbb G_1$ and let $T_n$ be the time just after the $n$-th vertical step of $M$. Write $\mathbb P_0$ for the law of $(M_i)_{i\ge0}$ starting at the origin. Then we have the following precise asymptotics:
\begin{theorem}\label{returnG1}
	$$
	\mathbb{P}_0\left(M_{T_{2n}}=(0,0)\right)\sim \frac{1}{2\sqrt{\pi}n^{3/2}}.
	$$
\end{theorem}

Theorem \ref{returnG1}, in turn, provides a new proof of the transience, see Corollary \ref{transience}.
In \cite{CGPS}, similar results as Theorem \ref{returnG1} are obtained for random walks on randomly oriented lattices.
\vspace{0.1 in}

Now consider the simple random walk on $\mathbb{G}_2$. To study its winding, we will focus on a continuous-time process $(W_t)_{t\geq 0}$  on $\mathbb R^2$, which is the scaling limit of the random walk on $\mathbb{G}_2$. Starting from the negative $x$-axis, the process $W_t$ drifts at unit speed to the right while performing a reflected Brownian motion vertically, until the first time it hits the positive $x$-axis, see Figure \ref{ladder}; then it continues analogously in the lower half plane but to the left until hitting the negative $x$-axis, and keeps alternating between two possibilities. A precise definition of $W_t$ is given in Section \ref{sec:Wt}.


\begin{figure}[h!]
	\centering
	\includegraphics[width= 5 in]{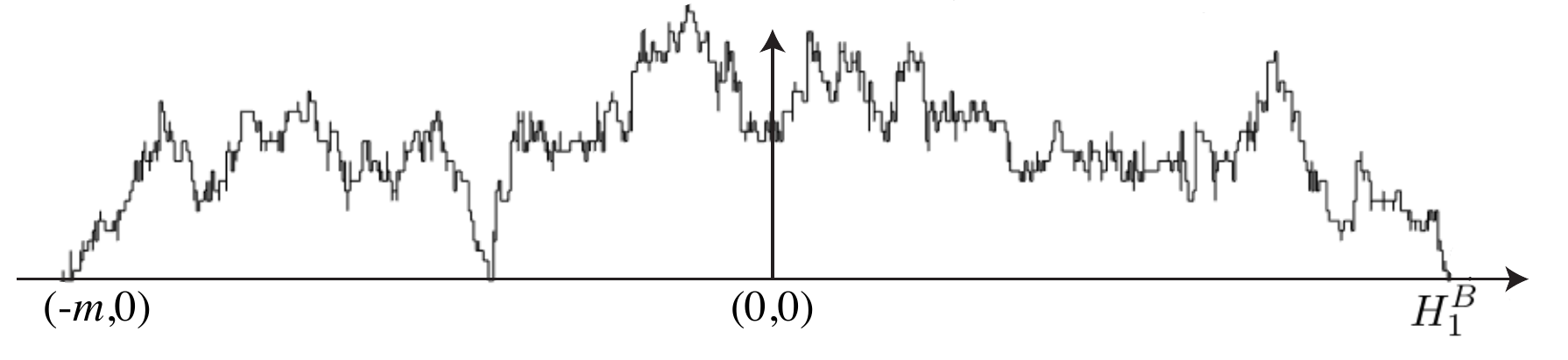}
	\caption{Illustration of the first step of the ladder height process.}\label{ladder}
\end{figure}

Let $\mathcal N_t$ be the winding number of $W_t$ around the origin up to time $t$. As shown in \cite{Belisle}, the big windings of a continuous process better capture the winding behavior of its discrete counterpart. So for $\epsilon>0$, also consider the big winding number $\mathcal N_t^b$ taking place outside a small ball of radius $\epsilon$ centered at the origin. The scaling limit in \eqref{dist2} below does not depend on the choice of $\epsilon$.


\begin{theorem}\label{windWt}
	\begin{equation} \label{dist1}
	\frac{2\pi^2 \mathcal{N}_t}{\log^2 t} \overset{d}{\implies} \rho_1
	\end{equation}
	and
	\begin{equation} \label{dist2}
	\frac{2\pi^2 \mathcal N_t^b}{\log^2 t} \overset{d}{\implies} \int_0^{\rho_1} \mathbbm{1}_{\{\beta_s>0\}}ds
	\end{equation}
	as $t \rightarrow \infty$. Here $\beta_s$ is a standard Brownian motion and $\rho_h$ represents its first hitting time at $h \in \mathbb{R}$.
\end{theorem}

Note that the limit in \eqref{dist2} has the same law as the hitting time of a reflected Brownian motion at one. So unlike the L\'evy distribution \eqref{dist1}, the distribution in \eqref{dist2} has sub-exponential tails. The comparison between \eqref{dist1} and \eqref{dist2} shows that it is the small windings near the origin that give the scaling limit of $\mathcal N_t$ its heavy tails.
Similar comments were made about the planar Brownian motion in \cite{Belisle91}.


In particular, Theorem \ref{windWt} shows that the winding and big winding numbers of $W_t$ grow faster than those of a planar Brownian motion.
The difference results from the fact that $W_t$ is only allowed to wind in the clockwise direction, whereas the planar Brownian motion chooses both directions randomly.
Surprisingly, the heuristic goes further by explaining the difference in scaling limits:
the Cauchy and hyperbolic secant distributions have the same law as the Brownian motion subordinated to an independent random time distributed as (\ref{dist1}) and (\ref{dist2}) respectively. In other words, their scaling limits are off essentially by a central limit theorem. In the same spirit, Theorem \ref{LLNG1} should be compared with the law in \cite{bertoin96}.



Our last result is about the tail of return time on $\mathbb G_2$, which quantifies its recurrence. For SRW on $\mathbb Z^2$, Dvoretzky and Erd\"os \cite{Erdos} showed that the return time to the origin has a tail of order $\Theta(1/\log k).$ By analyzing $W_t$ and exploiting the Lyapunov function methodology (see e.g. \cite{Popov}), we are able to prove a similar tail bound for $\mathbb G_2$.
Let $(X_i,Y_i)_{i \ge0}$ be the simple random walk on $\mathbb{G}_2$. Define the return time $\tau_0^+ := \min\{i \ge 1; X_i = Y_i = 0\}$.

\begin{theorem}\label{tail}
$$\lim_{k\to\infty}\frac{\log\mathbb{P}_0(\tau_0^+>k)}{\log\log k}  = -1.$$
\end{theorem}

In particular, this gives a new and self-contained proof of the recurrence of $\mathbb G_2$, see Sections \ref{sec:outline} and \ref{sec:approx}.


\subsection{Organization of the paper}

In Section \ref{sec:G1}, we shall analyze $\mathbb{G}_1$ and prove Theorems \ref{LLNG1} and \ref{returnG1}. We introduce an auxiliary process $(\mathcal G, S)$ in Section \ref{sec:G} and prove a strong LLN for its winding in Section \ref{sec:windG}. The auxiliary process $(\mathcal G, S)$ mimics the behavior of the random walk on $\mathbb G_1$ but has cleaner algebra. We come back to $\mathbb G_1$ in Section \ref{sec:returnG1}, proving Theorem \ref{returnG1} as well as the transience of $\mathbb G_1$ in Corollary \ref{transience}. Finally, in Section \ref{sec:windG1} we establish a comparison between the two processes and use the LLN for $(\mathcal G, S)$ to deduce Theorem \ref{LLNG1}.

In Section \ref{sec:G2}, we shall study $\mathbb{G}_2$ and prove Theorems \ref{windWt} and \ref{tail}. We give a precise definition of the continuous process $W_t$ in Section \ref{sec:Wt} and prove Theorem \ref{windWt} in Section \ref{sec:windWt}. In Sections \ref{sec:outline} and \ref{sec:approx}, we develop the key ingredients in the proof of Theorem \ref{tail} and prove the recurrence of $\mathbb G_2$ as an application. In Section \ref{sec:further} we prove Theorem \ref{tail}. The most technical parts of the proofs are postponed to the appendices.


\section{Random walk on $\mathbb G_1$ \label{sec:G1}}

\subsection{Auxiliary process $\mathcal G$}
\label{sec:G}

The main goal of Section \ref{sec:G1} is to prove Theorem \ref{LLNG1} for the random walk on $\mathbb G_1$. We start by introducing an analogous 2D process $(\mathcal G, S)$ with nicer algebra.

Let $S$ be a simple random walk on $\mathbb{Z}$. Recall that the graph of such a random walk is given by the path successively connecting the sequence of vertices $\{(i,S_i)\}_{i \ge 0}$ on $\mathbb{Z}^2$, with $e_i$ representing the line segment between $(i,S_i)$ and $(i+1,S_{i+1})$. We define a signed time process
\begin{equation} \label{time process}
\mathcal{G}_n := \sum_{i=0}^{n-1}\mathbf{1}_{\{ e_i \text{ is above $x$-axis} \} } - \mathbf{1}_{\{ e_i \text{ is below $x$-axis}\}}
\end{equation}
to be the difference between the time spent above and below $x$-axis. Roughly speaking, the simple random walk $S$ corresponds to the vertical movement of the random walk on $\mathbb{G}_1$, whereas
the signed time process $\mathcal{G}$ mimics the horizontal counterpart.

Let $\mathcal{N}_\mathcal{G}(n)$ be the number of windings around the origin of the two-dimensional process $(\mathcal{G},S)$ up to time $n$. We state an analogue of Theorem \ref{LLNG1} for $\mathcal{N}_\mathcal{G}(n)$. Later in Section \ref{sec:windG1}, we will establish a comparison between $\mathcal{N}_{\mathbb G_1}(n)$ and $\mathcal{N}_\mathcal{G}(n)$ and use Proposition \ref{LLNG} to prove Theorem \ref{LLNG1}.
We will prove Proposition \ref{LLNG} in Section \ref{sec:windG}.

\begin{proposition} \label{LLNG}
	\begin{equation*}
	\frac{\mathcal{N}_\mathcal{G}(n)}{\log n} \to \frac{1}{2\pi} \quad \text{a.s.}
	\end{equation*}
\end{proposition}




The following is a direct consequence of the usual Chung-Feller Theorem. See \cite{Huq} for a general introduction on the topic.

\begin{lemma}\label{returnGz}
For $z \in \{-2n, -(2n-4), \cdots, 2n-4, 2n\}$, we have
	\begin{equation}\label{uniform}
		\mathbb{P}_0\left(\mathcal G_{2n} = z \mid S_{2n} = 0\right) = \frac{1}{n+1}.
	\end{equation}
Thus for such $z$,
	$$
	\mathbb{P}_0\left((\mathcal G_{2n}, S_{2n}) = (z, 0)\right) \sim \frac{1}{\sqrt{\pi}n^{3/2}}.
	$$
The probabilities vanish for other $z$'s. \qed
\end{lemma}


\subsection{Winding of the auxiliary walk}
\label{sec:windG}

In this section we shall prove Proposition \ref{LLNG}. We will use the following definition of $\mathcal N_{\mathcal G}(2n)$:
\begin{equation*}
\mathcal{N}_\mathcal{G}	(2n) := \frac{1}{2}\sum_{i = 1}^n \mathbf{1}_{A_{2i}},
\end{equation*}
where for $i \in [1,n]$ we define $\tau_i:=\sup\{t<i;S_{2t}=0\}$ and
\begin{equation*}
A_{2i} := \{S_{2\tau_i} = S_{2i} = 0 \text{ and either } \mathcal{G}_{2\tau_i} \mathcal{G}_{2i} < 0 \text{ or } \mathcal{G}_{2\tau_i}=0 \text{ and } \mathcal{G}_{2i} > 0\}.
\end{equation*}
In words, we define $A_{2i}$ to be the event that $(\mathcal G, S)$ just completed a half winding at the $2i$-th step.
If $A_{2i}$ occurs, we say this half winding started at the $2\tau_i$-th step.
Note that since the walk is transient by Lemma \ref{returnGz}, whether we count the half windings where $\mathcal G_{2\tau_i}=0$ or $\mathcal G_{2i}=0$ wouldn't have any impact on the asymptotics in Proposition \ref{LLNG}.
Also define
\begin{equation*}
\tilde A_{2i} := \{S_{2\tau_i} = S_{2i} = 0 \text{ and }	\mathcal{G}_{2\tau_i} \mathcal{G}_{2i} \le 0 \}.
\end{equation*}

More generally, we would like to consider the law $\mathbb{P}_{2z}$ of $(\mathcal G, S)$, where the first coordinate $\mathcal G$ starts at $\mathcal G_0 = 2z$. For $n \ge 1$, we define $\mathcal G_n$ as in \eqref{time process} such that $\mathcal{G}_n := \mathcal{G}_{n-1} \pm 1$ with the sign depending on whether the edge $e_{n-1}$ is above or below $x$-axis. We use $\mathbb{P}$ without subscript to denote $\mathbb{P}_0$.

\begin{lemma}
Fix $z \in \mathbb Z$, $i \ge 1$ and $0 \le k \le i-1$, Let $m=i-k$ and $I_k = \{-k,-k+2,\cdots,k-2,k\}$. Then
\begin{equation} \label{PAcond}
 	\mathbb P_{2z}[A_{2i} \mid S_{2i} = 0, \tau_i = k] = \frac{1}{2(k+1)} \big | (-m, m) \cap (z+I_k)\big |
 \end{equation}
and
\begin{equation}\label{PtildeAcond}
	\mathbb P_{2z}[\tilde A_{2i} \mid S_{2i} = 0, \tau_i = k] \ge \frac{1}{2(k+1)} \big | [-m, m] \cap (z+I_k) \big |.
\end{equation}
Also we have
\begin{equation} \label{Pcond}
 	\mathbb P[S_{2i} = 0, \tau_i = k] \sim \frac{1}{2\pi k^{1/2} (i-k)^{3/2}}.
 \end{equation}	
\end{lemma}

\begin{proof}
	Let $U_k$ be the discrete uniform distribution on $I_k$. Let $X$ be a Rademacher random variable independent of $U_k$.
	By \eqref{uniform} the pair $(\mathcal G_{2\tau_i}, \mathcal G_{2i})$ conditioned on the event $S_{2i} = 0, \tau_i = k$ under $\mathbb P_{2z}$ has the same law as
	\begin{equation}\label{represent}
		\left(2z + 2U_k, 2z+ 2U_k + 2(i-k)X\right).
	\end{equation}
Thus the probability in \eqref{PAcond} is given by $$\mathbb P\big [|z + U_k| < i - k \text{ and } X \text{ has the correct sign}\big].$$	
This proves \eqref{PAcond}. Equation \eqref{PtildeAcond} can be proved similarly.

For \eqref{Pcond}, we simply use the Markov property and Chung-Feller Theorem.
\end{proof}

\begin{lemma}
For $i \ge 1$ and $z \in \mathbb Z$, we have
\begin{equation} \label{PA}
	\mathbb P(A_{2i}), \mathbb P(\tilde A_{2i}) \sim \frac{1}{\pi i}
\end{equation}
and
\begin{equation}\label{optimal}
	\mathbb{P}_{2z}(A_{2i}) \le \mathbb{P}_0(\tilde{A}_{2i}).
\end{equation}
When $|z| \ge i$, we have
\begin{equation}\label{trivial}
	\mathbb{P}_{2z}(A_{2i}) = 0.
\end{equation}
\end{lemma}




\begin{proof}
Using \eqref{PAcond} and \eqref{Pcond}, we get
\begin{equation*}
\mathbb P(A_{2i}) \sim \sum_{k \le i/2} \frac{1}{4\pi k^{1/2} (i-k)^{3/2}} + \sum_{k > i/2} \frac{1}{4\pi k^{3/2} (i-k)^{1/2}} \sim \frac{1}{\pi i}.
\end{equation*}
A similar calculation also works for $\mathbb P(\tilde A_{2i})$. This proves \eqref{PA}.

The inequality \eqref{optimal} follows from \eqref{PAcond}, \eqref{PtildeAcond} and the elementary fact that $$\big{|}(-m,m) \cap (z + I_k) \big{|} \le
	\big{|}[-m,m] \cap I_k \big{|}.$$ Note that the inequality in the above display would fail due to parity issue if we replaced $[-m, m]$ on the right-hand side by $(-m, m)$.

When $|z| \ge i$, we have $(-m, m) \cap (z + I_k) = \emptyset$, so $\mathbb P_{2z}(A_{2i}) = 0$.
\end{proof}

We also need the following estimates, which say that a half winding starts or completes close to the origin with small probability.

\begin{lemma} \label{smallG}
	For $i \ge 1$ and $\ell \in \mathbb N$ such that $i > 3\ell$,
	\begin{equation}\label{complete small}
		\mathbb{P}\big[ A_{2i}, |\mathcal G_{2i}|<2\ell \big] = \mathcal O\Big( \sqrt{ \frac{\ell}{i^3} }\Big).
	\end{equation}
For $0 \le \gamma \le 1$,
	\begin{equation}\label{start small}
		\mathbb{P}\big[ A_{2i}, |\mathcal G_{2\tau_i}|<2 \tau_i^\gamma \big] = \mathcal O\Big{(} \frac{1}{i^{3/2 - \gamma/2}} \Big{)}.
	\end{equation}
\end{lemma}

\begin{proof}
With the representation in \eqref{represent}, the conditional probability
\begin{equation*}
	\mathbb{P}\big[ A_{2i}, |\mathcal G_{2i}| < 2\ell \mid S_{2i} = 0, \tau_i = k \big]
\end{equation*}
is equal to
\begin{equation}\label{complete prob}
\frac{1}{2(k+1)} \#\big\{ y \in I_k; |y| < i-k \text{ and } (i-k) - |y| < \ell \big\}.	
\end{equation}
Note that the expression in \eqref{complete prob} is bounded above by $\mathcal O((i-k)/k) \wedge \mathcal O(\ell/k)$. Moreover, if $k \le (i-\ell)/2$, then for any $y \in I_k$ we have $$(i-k) - |y| \ge i-2k \ge \ell,$$ so \eqref{complete prob} vanishes. Thus we get
	\begin{equation*}
	\mathbb{P}\big[ A_{2i}, |\mathcal G_{2i}| < 2\ell \mid S_{2i} = 0, \tau_i = k \big] \le
	\begin{cases}
		0 & 0 \le k \le (i - \ell)/2,\\
		\mathcal O(\ell/k) & (i - \ell)/2 \le k \le i - \ell ,\\
		\mathcal O((i-k)/k) & i - \ell \le k \le i-1.\\
	\end{cases}
\end{equation*}
Combining the above estimate with \eqref{Pcond} yields \eqref{complete small}.

For \eqref{start small}, the representation in \eqref{represent} gives
\begin{equation*}
	\mathbb{P}\big[ A_{2i}, |\mathcal G_{2\tau_i}|<2 \tau_i^\gamma \mid S_{2i} = 0, \tau_i = k \big] = \mathcal O(k^{\gamma-1}) \wedge \mathcal O((i-k)/k).
\end{equation*}
Similarly, combining the above estimate with \eqref{Pcond} yields the desired bound.
\end{proof}


\begin{proof}[Proof of Proposition \ref{LLNG}]

By the definition of $\mathcal N_{\mathcal G}(2n)$ and \eqref{PA} we have
\begin{equation} \label{LLNExpectedSign}
	\mathbb{E}(\mathcal{N}_\mathcal{G}(2n)) \sim \frac{1}{2\pi}\log n.
\end{equation}
Our goal is to show
\begin{equation} \label{VarNG}
\text{Var}(\mathcal{N}_\mathcal{G}(2n)) \le c \log n	
\end{equation}
for some $c>0$. If both \eqref{LLNExpectedSign} and \eqref{VarNG} are true, then a Borel-Cantelli argument along the subsequence $\exp(k^{1+\epsilon})$ would imply the desired strong LLN, thanks to the monotonicity of $\mathcal N_\mathcal{G}(2n)$ in $n$.

To prove (\ref{VarNG}), it suffices to bound $\sum_{i = 1}^n \text{Var}(A_{2i})$ and the cross terms $\sum_{i=1}^n \sum_{j=1}^{i-1} \text{Cov}(A_{2j},A_{2i})$. The former is $\mathcal O(\log n)$ due to \eqref{PA}. For the cross terms, we consider two cases. Let $\alpha =2/3$.

When $1 \le j \le \alpha i$, by the Markov property, \eqref{optimal} and \eqref{PA} we get
\begin{align*}
\text{Cov}(A_{2j}, A_{2i}) &=  \mathbb{P}(A_{2j}\cap A_{2i}) - \mathbb{P}(A_{2j}) \mathbb{P}(A_{2i}) \\
&\le  \mathbb{P}(A_{2j}) \mathbb{P}(\tilde{A}_{2(i-j)}) - \mathbb{P}(A_{2j}) \mathbb{P}(A_{2i}) \\
&\sim \mathcal O \bigg{(} \frac{1}{j} \Big{(} \frac{1}{i-j} - \frac{1}{i} \Big{)} \bigg{)} = \mathcal O(1/i^2).
\end{align*}
When $\alpha i < j < i$, the above argument does not give us the desired bound. Instead we use \eqref{trivial} and \eqref{complete small} to get
\begin{align*}
\mathbb{P}(A_{2j} \cap A_{2i})
&= \sum_{|z| < i-j} \mathbb{P}_{2z}(A_{2(i-j)}) \, \mathbb{P}[\mathcal{G}_{2j} = 2z \,|\, A_{2j}] \, \mathbb{P}(A_{2j})\\
&\le \mathbb{P}(\tilde A_{2(i-j)}) \, \mathbb{P}\big[A_{2j}, |\mathcal{G}_{2j}| < 2(i-j) \big] \\
&\le \mathcal O\bigg{(} \frac{1}{i-j} \cdot  \sqrt{ \frac{i-j}{j^3} }\bigg{)} = \mathcal O\bigg{(} \frac{1}{ i^{3/2} (i-j)^{1/2} } \bigg{)}.
\end{align*}
Summing over $j$ in both cases shows that $\sum_{j=1}^{i-1} \text{Cov}(A_{2i},A_{2j})$ is of order $\mathcal O(1/i)$, so the sum of all cross terms is also $\mathcal O(\log n)$. This completes the proof.
\end{proof}

\subsection{Rate of decay of return probabilities}
\label{sec:returnG1}
In this section we start to treat the random walk on $\mathbb G_1$ and prove Theorem \ref{returnG1} and Corollary \ref{transience}.

\begin{proof}[Proof of Theorem \ref{returnG1}]
Recall that $(M_i)_{i\ge0}$ is the simple random walk on $\mathbb G_1$ and $T_n$ is the time just after the $n$-th vertical step of $M$. Consider the subordinated process
\begin{equation*}\label{decomp}
M_{T_n}=(\Xi_n, S_n),
\end{equation*}
where $S$ is the simple random walk on $\mathbb Z$, $\Xi_n:=\sum_{i=0}^{n-1} \xi_i$ and $\xi_i$ is the signed number of horizontal steps that $M$ takes between the $i-$th and the $i+1$-th vertical step. Note that $|\xi_i|$ is a geometric random variable with parameter $p=2/3$ and $\text{sgn}(\xi_i)$ determined by $\text{sgn}(S_i)$.

Define
$$\mathcal L^+_{2n}:=|\{0 \leq j <2n; S_j\geq 0\}|.$$
One can show that $\mathbb{P}_0(\mathcal L_{2n}^+=k| S_{2n}=0) \sim \frac{1}{2n}$ for $o(n) \le k \le 2n-1$ by decomposing with respect to the first time that $S$ enters the negative axis and using the generalized Chung-Feller Theorem 2.3.1 (3) in \cite{Huq}.
Then
\begin{align}\label{eq:LLTXn}
\mathbb{P}_0(\Xi_{2n}=0, &S_{2n}=0)=\sum_{k=1}^{2n} \mathbb{P}_0(\Xi_{2n}=0, S_{2n}=0, \mathcal L^+_{2n}=k)\nonumber\\
=& \sum_{k=1}^{2n} \mathbb{P}_0(\Xi_{2n}=0\mid S_{2n}=0, \mathcal L^+_{2n}=k)\mathbb{P}_0(A_{n}^+=k| S_{2n}=0)\mathbb{P}_0(S_{2n}=0)\nonumber\\
\sim&  \frac{1}{2\sqrt{\pi} n^{3/2}} \sum_{k=1}^{2n} \mathbb{P}_0(\Xi_{2n}=0\mid S_{2n}=0, \mathcal L^+_{2n}=k)\nonumber\\
=&  \frac{1}{2\sqrt{\pi} n^{3/2}} \sum_{k=1}^{2n} \mathbb{P}(\Xi_{2n,k}=0),
\end{align}
where $\Xi_{2n,k}:=\sum_{i=0}^{k-1} g_i - \sum_{i=k}^{2n-1} g_i$ for $1 \le k \le 2n$ and 
$(g_i)_{i\geq 0}$ is a sequence of i.i.d. geometric random variables with parameter $p=2/3$ and taking values in $\{0,1,2,...\}$. Let $m_{n,k}:=\mathbb{E}(\Xi_{2n,k})=k-n$ and $s_{n}:=\sigma^2(\Xi_{2n,k})=2n\sigma^2(g_1)$.
For $0<\delta<1/2$, we split the sum in (\ref{eq:LLTXn}) into two parts
\begin{equation}\label{eq:LLTXn2}
\sum_{|k-n|\leq n^{1/2+\delta}} \mathbb{P}(\Xi_{2n,k}=0) + \sum_{ |k-n| > n^{1/2+\delta}}\mathbb{P}(\Xi_{2n,k}=0).
\end{equation}
The first term in (\ref{eq:LLTXn2}) can be estimated by means of a local limit theorem for independent (not necessarily identically distributed) random variables. By \cite[Theorem 5]{Petrov} we get
\begin{align*}
\sum_{|k-n|\leq n^{1/2+\delta}} &\mathbb{P}(\Xi_{2n,k}=0) = \sum_{|k-n|\leq n^{1/2+\delta}} \left[\overline p_n^{m_{n,k}, s_n}\left(0\right) + \mathcal{O}\left(\frac{1}{n}\right) \right]\\
=& \sum_{|j|\leq n^{1/2+\delta}} \left[ \overline p_n^{0, s_n}\left(j\right) +\mathcal{O}\left(\frac{1}{n}\right) \right] = 1+o(1)+ \mathcal{O}\left(\frac{1}{n^{1/2-\delta}}\right),
\end{align*}
where $\overline p_n^{m_{n,k}, s_n}\left(x\right) =\frac{1}{\sqrt{2\pi s_{n}}}e^{-\frac{(x-m_{n,k})^2}{2s_{n}}}$.

We bound the second term in (\ref{eq:LLTXn2}) through large deviations. For $k\geq 0$ define $\hat \Xi_{2n,k}:=\Xi_{2n,k}-m_{n,k}.$ We have
\begin{align}\label{largedev}
\mathbb{P}\left(\hat \Xi_{2n,k}\geq n^{1/2+\delta} \right)
 =& \inf_{t>0} \mathbb{P}(e^{t\hat \Xi_{2n,k}}\geq e^{tn^{1/2+\delta}}) \leq \inf_{t>0} \frac{\mathbb{E}(e^{t\hat \Xi_{2n,k}})} {e^{tn^{1/2+\delta}}} \nonumber\\
 =&\inf_{t>0}\frac{\left(\frac{2e^{-t/2}}{3-e^{t}}\right)^{k}\left(\frac{2e^{t/2}}{3-e^{-t}}\right)^{2n-k}}{e^{tn^{1/2+\delta}}}=\mathcal{O}\left(e^{-\frac{n^{2\delta}}{3}}\right),
\end{align}
since by Taylor expansion
$\left(\frac{2e^{-t/2}}{3-e^{t}}\right)^{k}\left(\frac{2e^{t/2}}{3-e^{-t}}\right)^{2n-k}= 1 +\frac{3n}{4} t^2 + \mathcal{O}(nt^3).$
Analogously we have
\begin{align}\label{largedev1}
\mathbb{P}\left(\hat \Xi_{2n,k}\leq -n^{1/2+\delta} \right)=\mathcal{O}\left(e^{-\frac{n^{2\delta}}{3}}\right).
\end{align}
Combining (\ref{largedev}) and (\ref{largedev1}), we conclude
\begin{align}
\sum_{|k-n|> n^{1/2+\delta}} \mathbb{P}(\Xi_{2n,k}=0)
=&\sum_{|k-n|> n^{1/2+\delta}}  \mathbb{P}\left(\hat \Xi_{2n,k}=- \left(k-n\right)\right)=\mathcal{O}\left(ne^{-\frac{n^{2\delta}}{3}}\right) \nonumber.
\end{align}
This completes the proof of Theorem \ref{returnG1}.
\end{proof}

\begin{corollary} \label{transience}
The random walk on graph $\mathbb G_1$ is transient.
\end{corollary}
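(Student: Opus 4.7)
The plan is to bound the Green's function $G(0,0):=\sum_{i\geq 0}\mathbb{P}_0(M_i=(0,0))$ and show it is finite, which is equivalent to transience. The first step is to decompose visits of $M$ to the origin according to the vertical-step intervals $[T_n,T_{n+1})$. During such an interval the walker stays at height $S_n$ and moves horizontally; because horizontal steps go rightward whenever $S_n\geq 0$, the walker passes through $(0,0)$ in $[T_n,T_{n+1})$ if and only if $S_n=0$ and $-\xi_n\leq \Xi_n\leq 0$, in which case it does so exactly once. Hence
\begin{equation*}
G(0,0)=\sum_{n\geq 0}\mathbb{P}\bigl(S_n=0,\ \Xi_n\in[-\xi_n,0]\bigr).
\end{equation*}
Only even indices $n=2m$ can contribute. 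Moreover, conditional on $S_{2m}=0$ the variable $\xi_{2m}$ is a non-negative geometric random variable with $\mathbb{P}(\xi_{2m}\geq j\mid S_{2m}=0)=(1/3)^j$ and is independent of $\Xi_{2m}$, so each summand splits as
\begin{equation*}
\mathbb{P}\bigl(S_{2m}=0,\ \Xi_{2m}\in[-\xi_{2m},0]\bigr)=\sum_{j=0}^{\infty}(1/3)^{j}\,\mathbb{P}\bigl(\Xi_{2m}=-j,\ S_{2m}=0\bigr).
\end{equation*}

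The key step is to promote Theorem \ref{Thm:transience} to the uniform pointwise bound $\mathbb{P}(\Xi_{2m}=-j,\ S_{2m}=0)\leq C\,m^{-3/2}$ for every $j\in\mathbb{Z}$. This is obtained by the same route as the proof of Theorem \ref{Thm:transience}: conditioning on $A_m^+=k$ one has $\mathbb{P}(\Xi_{2m}=-j\mid S_{2m}=0,\,A_m^+=k)=\mathbb{P}(\Xi_{2m,k}=-j)$, which the LLT for independent summands bounds by $C/\sqrt{m}$ uniformly in $j$ and $k$; multiplying by $\mathbb{P}(A_m^+=k\mid S_{2m}=0)\sim 1/(2m)$ and by $\mathbb{P}(S_{2m}=0)\sim 1/\sqrt{\pi m}$, and summing over $k$ (the Gaussians in $k$ integrate to $O(1)$ by Riemann-sum approximation) gives the claim.

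Combining these two displays yields $\mathbb{P}(S_{2m}=0,\ \Xi_{2m}\in[-\xi_{2m},0])=O(m^{-3/2})$, hence
\begin{equation*}
G(0,0)\leq 1+\sum_{m\geq 1}\frac{C}{m^{3/2}}<\infty,
\end{equation*}
which is transience. The only step going beyond what is already in the proof of Theorem \ref{Thm:transience} is this uniform-in-$j$ version of the LLT, and I expect it to be the main (mild) technical obstacle; everything else is a bookkeeping exercise that channels all visits to the origin through the embedded chain $(\Xi_n,S_n)$.
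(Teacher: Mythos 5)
Your argument is correct and starts from the same decomposition that the paper uses: both express the Green's function of $M$ at the origin as $\sum_n \sum_{x\ge 0} (1/3)^x\,\mathbb{P}_0(\Xi_n=-x,\,S_n=0)$, exploiting that, on the $x$-axis, $M$ drifts monotonically rightward and the overshoot $\xi_n$ is a geometric with success probability $2/3$. Where you diverge is in how the factor $\sum_n \mathbb{P}_0(\Xi_n=-x,\,S_n=0)$ is controlled uniformly in $x$. The paper gets this for free: the embedded chain $(\Xi,S)$ is translation invariant in its first coordinate, so $\sum_n \mathbb{P}_0(\Xi_n=x,\,S_n=0)\le \sum_n \mathbb{P}_{(x,0)}(\Xi_n=x,\,S_n=0)=\sum_n \mathbb{P}_0(\Xi_n=0,\,S_n=0)$, and the right-hand side is finite directly from Theorem \ref{Thm:transience}, with no further local-limit work. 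You instead re-derive a pointwise bound $\mathbb{P}_0(\Xi_{2m}=-j,\,S_{2m}=0)\le C\,m^{-3/2}$ uniform in $j$ and then sum; this is in effect the content of Remark \ref{RemarkLLT}. That route is also valid, but somewhat heavier, and your sketch glosses over the points that actually carry the estimate: the crude uniform bound $\mathbb{P}(\Xi_{2m,k}=-j)\le C/\sqrt{m}$ alone yields only $O(1/m)$ after summing over the $\sim 2m$ values of $k$ with weight $\sim 1/(2m)$; to reach $m^{-3/2}$ one must use the LLT error $O(1/m)$ (not merely $O(1/\sqrt{m})$) so that the Gaussian main terms in $k$ genuinely Riemann-sum to $O(1)$, and one needs $\mathbb{P}(A_m^+=k\mid S_{2m}=0)=O(1/m)$ uniformly over \emph{all} $k$, not only the ``typical'' $k$ for which the paper quotes the $\sim 1/(2m)$ asymptotic. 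These gaps are fillable, but the translation-invariance shortcut sidesteps them entirely and is the cleaner proof.
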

\begin{proof}
Theorem \ref{returnG1} implies the transience of $(\Xi,S)$. Thus by the translational invariance of $\mathbb G_1$ in the horizontal direction, we may find $C>0$ such that $\sum_n \mathbb{P}_0(\Xi_n=x, S_n=0)\leq C<\infty$ for every $x\in\mathbb{Z}$.
Hence $$\sum_i \mathbb{P}_0(M_i=0)=\sum_n \sum_{x \geq0} \mathbb{P}_0(\Xi_n=-x, S_n=0)(1/3)^x  \leq C\sum_{x \geq0}(1/3)^x<\infty.$$
\end{proof}

By examining the proof of Theorem \ref{returnG1}, we are able to prove a stronger version of it for generic $z$. Notice that this is an analogue of Lemma \ref{returnGz} in the setting of $\mathbb G_1$.

\begin{theorem}\label{returnG1z}
For $\delta \in (0, 1/2]$ and $z \in \mathbb N$, we have
	$$\mathbb P_0(\Xi_{2n} = z \mid S_{2n} = 0) \begin{cases}
\sim \frac{1}{2n} & |z| < n - n^{1/2+\delta},\\
\lesssim \frac{1}{2n} & |z| \in [n - n^{1/2+\delta}, n + n^{1/2+\delta}],\\
= \mathcal O(e^{-n^{2\delta}}) & |z| > n + n^{1/2+\delta}.\\
\end{cases}
$$
\end{theorem}


\subsection{Winding of the random walk on $\mathbb G_1$}\label{sec:windG1}

In this section we shall complete the proof of Theorem \ref{LLNG1}. For $1 \le i \le n$, let $\tau_i:=\sup\{t<i;S_{2t}=0\}$ and
\begin{equation*}
B_{2i} := \{S_{2\tau_i} = S_{2i} = 0 \text{ and either } \Xi_{2\tau_i} \Xi_{2i} < 0 \text{ or } \Xi_{2\tau_i} = 0 \text{ and } \Xi_{2i} > 0\}.
\end{equation*}
We use the following definition of $\mathcal{N}_{\mathbb G_1}(t)$: for $n \ge 0$ and $T_{2n} \le t <T_{2n+2}$,
\begin{equation}
\label{def:NG1}
\mathcal{N}_{\mathbb G_1}	(t) := \frac{1}{2}\sum_{i = 1}^n \mathbf{1}_{B_{2i}}.
\end{equation}

Now consider the natural coupling between the vertical components of $(\mathcal G, S)$ and $M_{T_{\cdot}} = (\Xi, S)$.
We will establish a comparison between $\mathcal{N}_\mathcal{G}(2n)$ and $\mathcal{N}_{\mathbb{G}_1}(T_{2n})$. To this end, we define a series of random variables.
Let $$\mathcal D_s := \#\{i \ge 1; A_{2i} \text{ occurs and } \text{sgn}(\mathcal G_{2\tau_i}) \neq \text{sgn}(\Xi_{2\tau_i})\}$$
and
$$\mathcal D_c := \#\{i \ge 1; A_{2i} \text{ occurs and } \text{sgn}(\mathcal G_{2i}) \neq \text{sgn}(\Xi_{2i})\}.$$

Recall the definition of $\mathcal L_{2n}^+$ from the proof of Theorem \ref{returnG1}. Let $\mathcal L_{2n}^0 := \#\{0 \le i < 2n; S_i = 0\}$.
Note that $\mathbb E(\Xi_{2n} \mid \mathcal L_{2n}^+) = \mathcal L_{2n}^+ - n$ and \begin{equation}\label{origin}
	\mathcal G_{2n}/2 + n \le \mathcal L_{2n}^+ \le (\mathcal G_{2n}/2 + n) + \mathcal L^0_{2n}.
\end{equation}
For $1/2 < \gamma < 1$, further define
\begin{enumerate}[(i)]
\item $\mathcal N_f := \#\{n \ge 0; \big|\Xi_{2n} - (\mathcal L^+_{2n} - n)\big| \ge n^\gamma\},$ \label{fluctuate}
\item $\mathcal N_{\mathcal L} := \#\{n \ge 0; \mathcal L^0_{2n} \ge n^\gamma\};$ \label{local}
\item $\mathcal N_s := \#\{i \ge 1; A_{2i} \text{ occurs and } |\mathcal G_{2\tau_i}| < 4\tau_i^\gamma\},$
\item $\mathcal N_c := \#\{i \ge 1; A_{2i} \text{ occurs and } |\mathcal G_{2i}| < 4 i^\gamma\},$
\item $\mathcal N_b := \mathcal N_f + \mathcal N_\mathcal{L} + \mathcal N_s/2 + \mathcal N_c/2;$
\item $\mathcal N'_s := \#\{i \ge 1; B_{2i} \text{ occurs and } |\Xi_{2\tau_i}| < 2\tau_i^\gamma\},$
\item $\mathcal N'_c := \#\{i \ge 1; B_{2i} \text{ occurs and } |\Xi_{2i}| < 2 i^\gamma\},$
\item $\mathcal N'_b := \mathcal N_f + \mathcal N_\mathcal{L} + \mathcal N'_s/2 + \mathcal N'_c/2.$
\end{enumerate}


We make the following two claims.

\begin{lemma}\label{comparison}
	$\mathcal N_{\mathcal G}(2n) \le \mathcal N_{\mathbb G_1}(T_{2n}) + \mathcal N_b$ and $ \mathcal N_{\mathbb G_1}(T_{2n}) \le \mathcal N_{\mathcal G}(2n)  + \mathcal N'_b$.
\end{lemma}

\begin{lemma}\label{Nb}
	$\mathcal N_b, \mathcal N'_b < \infty$ a.s.
\end{lemma}

\begin{proof}[Proof of Theorem \ref{LLNG1}]
	By Proposition \ref{LLNG} and the above two lemmas, we get $$\lim_{n\to\infty} \frac{1}{\log n} \mathcal N_{\mathbb G_1}(T_{2n}) = \lim_{n\to\infty} \frac{1}{\log n} \mathcal N_{\mathcal G}(2n) = \frac{1}{2\pi}\quad \text{a.s.}$$
	Note that $T_{2n-n^{1/2+\delta}} \le 3n \le T_{2n+n^{1/2+\delta}}$ holds a.s. for large enough $n$ and $0 < \delta <1/2$. Then Theorem \ref{LLNG1} follows from the monotonicity of $\mathcal N_{\mathbb G_1}(n)$ in $n$, see the definition \eqref{def:NG1}.
\end{proof}




\begin{proof}[Proof of Lemmas \ref{comparison}]
If $A_{2i}$ occurs but $B_{2i}$ does not occur, then either $\text{sgn}(\mathcal G_{2\tau_i}) \neq \text{sgn}(\Xi_{2\tau_i})$ or $\text{sgn}(\mathcal G_{2i}) \neq \text{sgn}(\Xi_{2i})$.
So we have
\begin{equation}\label{differ}
	\mathcal{N}_\mathcal{G}(2n) \le \mathcal{N}_{\mathbb{G}_1}(T_{2n}) + \mathcal D_s/2 + \mathcal D_c/2.
\end{equation}


Now suppose that for some $n$, the events in \eqref{fluctuate} and \eqref{local} do not happen and $|\mathcal G_{2n}| \ge 4 n^\gamma$,
then by \eqref{origin}
\begin{align*}
	\big|\Xi_{2n} - \mathcal G_{2n}/2\big|
	\le \big|\Xi_{2n} - (\mathcal L^+_{2n} - n)\big| + \big| (\mathcal L^+_{2n} - n) - \mathcal G_{2n}/2 \big| < 2n^\gamma,
\end{align*}
which implies $\mathcal G_{2n}$ and $\Xi_{2n}$ must have the same sign. Thus we get
\begin{equation*}
\mathcal D_s \le \mathcal N_f + \mathcal N_\mathcal{L} + \mathcal N_s
\text{ and }
\mathcal D_c \le \mathcal N_f + \mathcal N_\mathcal{L} + \mathcal N_c
\end{equation*}
and by \eqref{differ} we conclude that
\begin{align*}
\mathcal{N}_\mathcal{G}(2n) &\le \mathcal{N}_{\mathbb{G}_1}(T_{2n}) + \mathcal N_f + \mathcal N_\mathcal{L} + \mathcal N_s/2 + \mathcal N_c/2\\
&:= \mathcal{N}_{\mathbb{G}_1}(T_{2n}) + \mathcal N_b.
\end{align*}
The proof of the other inequality is similar, so we omit the details.
\end{proof}

To prove Lemma \ref{Nb}, we need an analogue of Lemma \ref{smallG} for the random walk on $\mathbb G_1$.

\begin{lemma} \label{smallG1}
	For $i \ge 1$ and $\ell \in \mathbb N$ such that $i > 3\ell$,
	\begin{equation}\label{complete smallG1}
		\mathbb{P}\big[ B_{2i}, |\Xi_{2i}|< \ell \big] = \mathcal O\Big( \sqrt{ \frac{\ell}{i^3} }\Big).
	\end{equation}
For $0 \le \gamma \le 1$,
	\begin{equation}\label{start smallG1}
		\mathbb{P}\big[ B_{2i}, |\Xi_{2\tau_i}|< \tau_i^\gamma \big] = \mathcal O \Big{(} \frac{1}{i^{3/2 - \gamma/2}} \Big{)}.
	\end{equation}
\end{lemma}

\begin{proof}
We imitate the proof of Lemma \ref{smallG}. The conditional probability
\begin{equation} \label{PBXicond}
 	\mathbb P[B_{2i}, |\Xi_{2i}| < \ell \mid S_{2i} = 0, \tau_i = k].
 \end{equation}
can be rewritten as
$$\frac{1}{2}\mathbb P\big [|\Xi_{2k}| < |\Xi_{2i} - \Xi_{2k}| \text{ and } |\Xi_{2i} - \Xi_{2k}| - |\Xi_{2k}| <\ell \,\big|\, S_{2i} = 0, \tau_i = k\big].$$
Note that conditioned on $S_{2i} = 0, \tau_i = k$, the law of $\Xi_{2k}$ is independent of $\Xi_{2i} - \Xi_{2k}$ and is close to being ``uniform" in $[-k, k]$ by Theorem \ref{returnG1z}.
Moreover, the Chernoff bound implies that $\Xi_{2i} - \Xi_{2k}$ concentrates around $\text{sgn}(S_{2i-1})\cdot(i-k)$ with high probability.
Thus we obtain a similar bound on \eqref{PBXicond} as in the proof of Lemma \ref{smallG}, except that the probability in the first case is exponentially small in $i$ instead of being zero. This proves \eqref{complete smallG1}. The proof of \eqref{start smallG1} is similar.
\end{proof}

\begin{proof}[Proof of Lemma \ref{Nb}]
All $\mathcal N_f$, $\mathcal N_\mathcal{L}$, $\mathcal N_s$, $\mathcal N_c$, $\mathcal N'_s$, $\mathcal N'_c < \infty$ almost surely by the concentration bound \eqref{largedev}, Theorem 10.2 and its consequences in \cite{revesz}, estimates \eqref{start small}, \eqref{complete small}, \eqref{start smallG1} and \eqref{complete smallG1} respectively. Thus $\mathcal N_b,\mathcal N'_b < \infty$ a.s.	
\end{proof}


\vspace{0.2 in}

\section{Random walk on $\mathbb G_2$}\label{sec:G2}

\subsection{The continuous process $W_t$}\label{sec:Wt}

We give a precise definition of the continuous-time process $(W_t)_{t \ge 0}:=(W_t^{(1)}, W_t^{(2)})_{t\geq 0}$ on $\mathbb R^2$, which we briefly explained in Section \ref{sec:intro}. Let $m\in\mathbb{R}^+$ and $(B_t^R)_{t \ge 0}$ be a one-dimensional reflected Brownian motion.
Inductively, we define $W_t$ together with a sequence of stopping times $(U_n)_{n\geq 0}$. Set $U_0:=0$ and $W_0:=(-m,0)$ as the initial position. For every $n \ge 1$, let $$U_{n}:=\min\big\{t>U_{n-1}+\big|W^{(1)}_{U_{n-1}}\big|;  B_t^R=0\big\}$$
and
$$
W_t:=
\begin{cases}
\big(t-U_{2n} +  W_{U_{2n}}^{(1)},B_t^R\big)  \,\,\, &\text{ if }  t\in [U_{2n}, U_{2n+1}) \text{ for some } n \geq 0, \\
\big(-t+U_{2n+1} + W_{U_{2n+1}}^{(1)},-B_t^R\big)     \,\,\, &\text{ if }  t\in [U_{2n+1}, U_{2n+2}) \text{ for some } n \ge 0.
\end{cases}
$$

In most cases needed, it suffices to keep track of $W_t$ at these random times $U_n$.
Thus we define $H_n^B:=|W_{U_{n}}^{(1)}|$ and call this discrete-time process $(H_n^B)_{n\ge0}$ with continuous state space $\mathbb R^+$ the \textit{continuous ladder height process}. Note that the ladder height process is a Markov chain in its own right.

It is straightforward to calculate the one-step distribution of $H_n^B$. Let $Z$ be a standard normal random variable and $\rho_h$ an independent variable with a L\'evy distribution, i.e., the hitting time at $h>0$ for a standard Brownian motion started at the origin. Starting from $(-m,0)$, the process $W_t$ crosses the $y$-axis at time $m$ with $y$-coordinate distributed as $\sqrt{m}|Z|$. Then the process continues until hitting the positive $x$-axis at $\rho_{\sqrt{m}|Z|}$. Thus by the space-time scaling of Brownian motion (see e.g. \cite{Feller} Vol.2 p.170), we have
\begin{equation}
\label{H1B}
H_1^B\overset{d}{=}\rho_{\sqrt{m}|Z|}\overset{d}{=}(\sqrt{m}|Z|)^2\rho_1=mZ^2\rho_1,
\end{equation}
with $Z$ and $\rho_1$ independent of each other.
As a consequence, we may represent $H_n^B$ as the product of i.i.d. random variables $\eta_n$:
\begin{equation}
\label{HnB}
H^B_n := \eta_n H_{n-1}^B = m\prod_{i=1}^n \eta_i=\exp \left( \log m+\sum_{i=1}^n \log \eta_i\right)
\end{equation}
with $\eta_1\overset{d}{=}Z^2 \rho_1.$
Since by reflection principle $\rho_1 \overset{d}{=} 1/Z^2$ (see Cor.2.22 in \cite{Legall}), it follows that $\log \eta_1$ is symmetric and, in particular, has zero mean. 
This shows the recurrence of the ladder height process $(H_n^B)_{n \ge 0}$. Indeed we have $\liminf_{n\to\infty} H_n^B = 0$. This implies that $W_{U_n}$ is recurrent and so is the continuous process $W_t$. In Section \ref{sec:outline}, we will adapt this argument to the discrete setting
and prove the recurrence of $\mathbb G_2$.

\subsection{Scaling limits of winding numbers}
\label{sec:windWt}

In this section we shall prove Theorem \ref{windWt}. First, we give rigorous definitions of $\mathcal N_t$ and $\mathcal N_t^b$.
Let $$
\mathcal{T}_n
:=\sum_{i=0}^{2n-1} \big( H_i^B + H_{i+1}^{B}\big)
$$
be the time at which $W_t$ just completed its $n$-th winding around the origin. We define the winding number $\mathcal N_t:=n$ if $\mathcal T_n \le t < \mathcal T_{n+1}$. Also define the big winding number
$$\mathcal N_t^b := \frac{1}{2} \sum_{n=0}^{2\mathcal{N}_t -1} \mathbbm{1}_{\{H_n^B > \epsilon\}},$$ which counts one half of the half windings started outside a small neighborhood of the origin with radius $\epsilon>0$.

Recall \eqref{HnB}. Let $$
\mu_n:=\max_{0 \le j \le 2n} H_j^B= m \, \exp\left(\max_{0 \le j \le 2n}\sum_{i=1}^j \log \eta_i\right).
$$
Note that
\begin{equation} \label{mu_n}
\log \mu_n \leq \log \mathcal{T}_n \leq \log (4n) + \log \mu_n,
\end{equation}
where in the second inequality, we bound each $H_i^B$ term in the definition of $\mathcal T_n$ by $\mu_n$. Also define $$\mathcal N_t^* := \min\{n \ge 0;\log \mu_{n+1} > \log t\}.$$ Since $\sum_{i=1}^j \log \eta_i$ is the sum of i.i.d. random variables with zero mean and finite variance $\sigma^2$, by applying Donsker-type theorem on the first hitting time at one, we get
$$
\frac{2\sigma^2 \mathcal{N}_t^*}{\log^2 t} \overset{d}{\implies} \rho_1.
$$
The value of $\sigma^2$ will be determined at the end of the proof.

We claim that for $0 < \alpha < 1/2$,
$$
 \mathcal N^*_{t \big/4\log^{1/\alpha} t}
 \le \mathcal N_t \le \mathcal N^*_t
$$
for large enough $t$ a.s. This would have proved \eqref{dist1}. To show the claim, note that for $0 < \alpha < 1/2$, the anti-concentration bound $\log \mu_n \ge n^\alpha$
holds for all large $n$ a.s. by a Borel-Cantelli argument along the subsequence $n = 2^k$. Thus we have $\mathcal N_t^* \le \log^{1/\alpha}t$ for all large $t$ a.s. With this fact, the claim can be proved by a straightforward argument using \eqref{mu_n} and the definitions of $\mathcal N_t$ and $\mathcal N_t^*$.
This completes the proof of \eqref{dist1} and a similar argument works for \eqref{dist2}.

To finish the proof, we calculate that $\sigma^2 = \pi^2$. Since $\eta_1\overset{d}{=}Z^2 \rho_1$ with $Z$ and $\rho_1$ independent of each other and $\rho_1 \overset{d}{=} 1/Z^2$  by the reflection principle, we have $\sigma^2 = \text{Var}(\log \eta_1 )= 8\text{Var}(\log|Z|)$. By direct computation, the cumulant-generating function of $\log|Z|$ is given by $$K(t) := \log \mathbb E[e^{t\log|Z|}] = \log \mathbb E|Z|^t = \log\Gamma\left(\frac{t+1}{2}\right) + \frac{\log 2}{2} t - \frac{\log\pi}{2},$$
where $t>-1$ and $\Gamma(s)$ is the gamma function. Using the notation of polygamma function and its reflection formula (see e.g. 6.4.1 and 6.4.7 from \cite{ASR}), we get
\begin{align*}
\text{Var}(\log|Z|) &= K''(0) = \frac{1}{4}(\log\Gamma)''(1/2)\\
&=\frac{1}{4}\psi^{(1)}(1/2)=\frac{\pi^2}{8}.
\end{align*}
Combining these gives us $\sigma^2=\pi^2$.
\qed

\subsection{Recurrence of $\mathbb G_2$: outline of proof} \label{sec:outline}
In this and the next sections we will provide a new and self-contained proof of the recurrence of $\mathbb G_2$. Simultaneously, we will develop the key ingredients in the proof of Theorem \ref{tail}, which will be treated in Section \ref{sec:further}.

Consider the random walk $(X_i,Y_i)_{i \ge 0}$ on $\mathbb G_2$. Most of the time we assume the random walk starts at $(X_0,Y_0)=(-m,0)$ for some $m \in \mathbb Z_+$ and denote its law by $\mathbb P_m$. Sometimes we also want the random walk to start at $(X_0,Y_0)=(0,h)$ for some $h \in \mathbb Z_+$, in which case we write $\mathbb P_h$.

Following the approach in Section \ref{sec:Wt}, we define a sequence of stopping times $(\tau_n)_{n \ge 0}$ and consider the \textit{discrete ladder height process} $(H_n)_{n \ge 0}$ with state space $\mathbb N$. Precisely, let $\tau_0:=0$ and for $n \ge 1$, $$\tau_n:= \inf\{i>\tau_{n-1};Y_i=0 \text{ and } X_i X_{\tau_{n-1}} \le 0 \}.$$
Then define $H_n:= |X_{\tau_n}|.$

It is not hard to see that the process $H_n$ is a Markov chain in its own right and has the same recurrence property as the original chain $(X,Y)$. In the combinatorial setting, however, we no longer have the exact representation as in \eqref{HnB}.
Instead we resort to the more robust Lyapunov function method and consider a concave function of $\log H_1$.

In the following, we stick to the convention that $\log H_1=0$ when $H_1=0$ for simplicity.
Using the inequality $\sqrt{1+x} \le 1+\frac{1}{2}x-\frac{1}{16}x^2$ for $x\in[-1,1]$, we have on the event $\{1\le H_1 \le m^2\}$ that
\begin{align*}
\sqrt{\log H_1}=&\sqrt{\log m + \log (H_1/m)} = \sqrt{\log m}\,\sqrt{1+\frac{\log (H_1/m)}{\log m}}\\
\leq& \sqrt{\log m}\,\left\{1+\frac{\log (H_1/m)}{2\log m} - \frac{1}{16} \left[\frac{\log (H_1/m)}{\log m}\right]^2  \right\}.\\
\end{align*}
Taking expectation, we get
\begin{alignat}{2}
\mathbb{E}_m\sqrt{\log H_1} \leq& \sqrt{\log m} + \frac{\mathbb{E}_m \left( \log H_1 - \log m \right)}{2\sqrt{\log m}}
-\frac{\mathbb{E}_m \left(\log H_1 - \log m\right)^2}{16(\log m)^{3/2}} \nonumber \\
+& \frac{\mathbb{E}_m \left[\left(\log H_1 - \log m\right)^2; H_1 >m^2\right]}{16(\log m)^{3/2}}  +  \mathbb E_m\left[ \sqrt{\log H_1}; H_1 > m^2 \right] \nonumber\\
\leq&\sqrt{\log m} + \frac{\mathbb{E}_m \left( \log H_1 - \log m \right)}{2\sqrt{\log m}}
-\frac{\mathbb{E}_m \left(\log H_1 - \log m\right)^2}{16(\log m)^{3/2}}  \nonumber \\
+& 2\mathbb{E}_m \left[\log^2 H_1; H_1 >m^2\right] \nonumber \\
=:&\sqrt{\log m} + \epsilon_1(m) - \epsilon_2(m) + \epsilon_3(m). \label{key estimate}
\end{alignat}
Once we show that $\epsilon_1(m)+\epsilon_3(m) << \epsilon_2(m)$ for large enough $m$, we may apply the criterion \cite[Thm.2.5.2]{Popov} on the Lyapunov function $\sqrt{\log x}$ to conclude the recurrence of $\mathbb G_2$. It remains to establish the following bounds.

\begin{lemma}\label{epsilon}
\begin{enumerate}[(i)]
\item For small enough $\delta>0$,
\begin{equation}\label{epsilon1}
\mathbb{E}_m \left( \log H_1 - \log m \right) = \mathcal{O}\left(\frac{1}{m^{1/2-3\delta}}\right).
\end{equation}	
\item There exist constants $c_1, c_2 > 0$ such that for large enough $m$,
\begin{equation}\label{epsilon2}
	c_1 \le \mathbb{E}_m\left(\log H_1 - \log m\right)^2 \le c_2.
\end{equation}
\item For small enough $\delta>0$,
\begin{equation*}\label{epsilon3}
	\epsilon_3(m) = \mathcal{O}\left( \frac{\log^2 m}{ m^{1/2-4\delta}}\right).
\end{equation*}
\end{enumerate}
\end{lemma}



\subsection{Approximation estimates} \label{sec:approx}

We shall prove the bounds in Lemma \ref{epsilon}. In all cases, the proof goes by approximating $H_1$ by its continuous counterpart $H_1^B$, using local limit theorems and Euler-Maclaurin formulas.

We will achieve the approximation through a two-stage analysis as in \eqref{H1B}. For $n \ge 1$, let
\begin{equation*}
\sigma_n := \inf\{i>\tau_{n-1}; X_i=0\} \label{def:Vn}
\end{equation*}
and define $V_n:=|Y_{\sigma_n}|$.
For $m,h,l \in \mathbb Z_+$, let $p_{m,h} := \mathbb P_m(V_1=h)$ be the probability that the random walk starting from $(-m,0)$ hits the $y$-axis at $(0,h)$ and $q_{h,l} := \mathbb P_h(H_1=l)$ the probability that the random walk started at $(0,h)$ hits the $x$-axis at point $(l,0)$. We state two local limit theorems for $p_{m,h}$ and $q_{h,l}$. Both proofs are standard, so we postpone them to Appendix \ref{app:llt}.

\begin{lemma}\label{pmh}
	For small enough $\delta>0$,
\begin{equation*}
	p_{m,h}
	= \frac{1}{\sqrt{\pi m}}e^{-\frac{h^2}{4m}} +  \mathcal{O}\left( \frac{1}{\sqrt{m} h^2}\wedge \frac{1}{m^{3/2}} + \frac{e^{-\frac{h^2}{8m}} }{m^{1-\delta}} \right)
\end{equation*}
and
\begin{equation*}
	q_{h,l}
	= \frac{ h}{2\sqrt{ \pi} l^{3/2}} e^{-\frac{h^2}{4l}} + \mathcal{O}\left( \frac{1}{l^{3/2} h}\wedge \frac{h}{l^{5/2}} + \frac{h}{l^{2-\delta}} e^{-\frac{h^2}{8l}} \right).
\end{equation*}
\end{lemma}


Recall \eqref{H1B}. Note that $\mathbb E \log \rho_1 = -2\, \mathbb{E}(\log |Z|) = \gamma + \log 2$, where $\gamma$ represents the Euler constant. Consider the following two approximation errors:
\begin{align*} 
R_f(m) :&=\mathbb E_m(\log V_1) - \mathbb E\log (\sqrt{2m}|Z|)\\
&= \mathbb E_m(\log V_1) - (\log m)/2 + \gamma/2 
\end{align*}
and
\begin{align*} 
R_g(h) :&=  \mathbb E_h(\log H_1) - \mathbb E\log(h^2 \rho_1/2)\\ &= \mathbb E_h(\log H_1) - 2\log h - \gamma.
\end{align*}

\begin{proposition} \label{RfRg}
For small enough $\delta>0$,
$$R_f(m)
=\mathcal{O}\left(\frac{1}{m^{1/2-3\delta}}\right) \text{ and }
R_g(h)
=\mathcal{O}\left(\frac{1}{h^{1-3\delta}}\right).$$
\end{proposition}

\begin{proof}
Let $f_m(x):=\frac{\log(x)}{\sqrt{\pi m}}e^{-\frac{x^2}{4m}}$ and $g_h(x):=\log x\frac{ h}{2\sqrt{\pi} x^{3/2}} e^{-\frac{h^2}{4x}}$ be two functions defined on $\mathbb R_+$. We decompose $R_f(m)$ and $R_g(h)$ as follows:
\begin{align}\label{totalerror1}
R_f(m)&:=\sum_{h=1}^\infty p_{m,h}\log h - \int_0^{\infty} f_m(x)dx =\sum_{h=1}^{\infty} \left[p_{m,h}\log h - f_m(h) \right] + \sum_{h=m^{1/2 + \delta}}^\infty f_m(h) \nonumber\\
&+ \left( \sum_{h=1}^{m^{1/2 + \delta}} f_m(h) - \int_1^{m^{1/2 + \delta}}f_m(x)dx \right) + \left( \int_1^{m^{1/2 + \delta}} f_m(x)dx - \int_0^{\infty} f_m(x)dx \right) \nonumber\\
&=: I_1 + I_2 + I_3 + I_4
\end{align}
and
\begin{align}\label{totalerror2}
R_g(h)&:=\sum_{l=1}^\infty q_{h,l} \log l - \int_0^{\infty} g_h(x)dx =\sum_{l=1}^{\infty} \left[q_{h,l} \log l - g_h(l) \right] + \sum_{l=1}^{h^{2-\delta} } g_h(l) \nonumber\\
&+ \left( \sum_{l=h^{2-\delta} }^\infty g_h(l) - \int_{h^{2-\delta} }^\infty g_h(x)dx \right) + \left( \int_{h^{2-\delta} }^\infty g_h(x)dx - \int_0^{\infty} g_h(x)dx \right) \nonumber\\
&=: J_1 + J_2 + J_3 + J_4
\end{align}
for $\delta>0$ sufficiently small.

We deal with each term in the decomposition one by one.

\begin{enumerate}[(i)]
\item Thanks to Lemma \ref{pmh} we can estimate $I_1$:
\begin{equation*}\label{I_1}
I_1=\sum_{h=1}^{\infty} \log h \, \mathcal{O}\left( \frac{1}{\sqrt{m} h^2} + \frac{e^{-\frac{h^2}{8m}} }{m^{1-\delta}} \right)=\mathcal{O}\left(\frac{\log m}{m^{1/2-\delta}}\right).
\end{equation*}
Here for the second term of the summation, we use a uniform bound for all $h \le \sqrt{m}$ and an integral to bound the sum for $h \ge \sqrt{m}$, where the error is monotone in $h$. Applying a similar splitting at $\ell = h^2$, we get
\begin{equation*}\label{J_1}
J_1=\sum_{l=1}^\infty \log l \, \mathcal{O}\left( \frac{1}{l^{3/2} h} + \frac{h}{l^{2-\delta}} e^{-\frac{h^2}{8l}} \right) =\mathcal{O}\left(\frac{\log h}{h^{1-2\delta}}\right).
\end{equation*}
\item For $I_2$ and $J_2$, an integral bound as in (i) gives
\begin{equation*}\label{I_2}
I_2=\mathcal{O}\left(e^{-m^{2\delta}}\right) \text{ and }
J_2= \mathcal{O}\left(e^{-h^\delta}\right).
\end{equation*}
\item By applying a first-order Euler-Maclaurin approximation, we obtain that
\begin{equation*} \label{I_3}
I_3 =\mathcal{O}\left(\frac{\log m}{m^{1/2-2\delta}}\right)
\text{ and }
J_3 = \mathcal{O}\left(\frac{\log h}{ h^{2- 5\delta/2} }\right).
\end{equation*}
Full details are provided in Appendix \ref{app:EMapprox}.
\item Finally, direct computations show that:
\begin{equation*}\label{I_4}
I_4
	=\mathcal{O}\left(1/\sqrt{m}\right)
\text{ and }
J_4 
	= \mathcal{O}\left(e^{-ch^\delta}\right).
\end{equation*}
\end{enumerate}

We finish the proof of Proposition \ref{RfRg} by combining (\ref{totalerror1}) and (\ref{totalerror2}) with those estimate.
\end{proof}

\begin{proposition} \label{var}
For small enough $\delta>0$,
$$\text{Var}_m(\log V_1) - \text{Var}\left( \log(\sqrt{2m}|Z|)\right)
=\mathcal{O}\left(\frac{1}{m^{1/2-3\delta}}\right) $$
and
$$
\text{Var}_h(\log H_1) - \text{Var}\left( \log(h^2\rho_1/2)\right)
=\mathcal{O}\left(\frac{1}{h^{1-3\delta}}\right).$$
\end{proposition}

\begin{proof}
By Proposition \ref{RfRg}, it suffices to show the same estimates for\begin{equation*}
\tilde R_f(m) :=\mathbb E_m(\log^2 V_1) - \mathbb E\log^2 (\sqrt{2m}|Z|)
\end{equation*}
and
\begin{equation*}
\tilde R_g(h) := \mathbb E_h(\log^2 H_1) - \mathbb E\log^2(h^2\rho_1/2).
\end{equation*}
This can be shown by going through almost the same proof as Proposition \ref{RfRg} but changing $\log$ to $\log^2$.
\end{proof}

\begin{proof}[Proof of Lemma \ref{epsilon}]
By Markov property,
\begin{align*}
\mathbb{E}_m \log (H_1 / m)
	=& \mathbb E_m\log (V_1^2 / m) + \mathbb E_m \log (H_1 / V_1^2)\\
=& \left[2\,\mathbb{E}_m(\log V_1) - \log m\right] + \sum_{h=1}^\infty \left[ \mathbb{E}_h (\log H_1)-2\log h \right] \mathbb{P}_m (V_1=h).
\end{align*}
The above calculation, together with Proposition \ref{RfRg} and Lemma \ref{pmh}, proves \eqref{epsilon1}.

To prove \eqref{epsilon2}, by \eqref{epsilon1} it suffices to show that
$$c_1 \le \text{Var}_m(\log H_1) \le c_2$$
for some $c_1, c_2 > 0$ and sufficiently large $m$.
By Proposition \ref{RfRg} we have
\begin{align*}
	\text{Var}_m(\log H_1) &= \text{Var}_m\big( \mathbb{E}_m(\log H_1 \mid V_1) \big) + \mathbb{E}_m\big( \text{Var}_m(\log H_1 \mid V_1) \big)\\
	&= \text{Var}_m\big(2\log V_1 + \mathcal O(1/V_1^{1-3\delta})\big)+ \sum_{h=1}^\infty \text{Var}_h(\log H_1) \mathbb P_m(V_1 = h).
\end{align*}
The above decomposition, combined with Proposition \ref{var} and Lemma \ref{pmh}, proves the desired variance bound.

For the truncation error $\epsilon_3(m)$, we have by Lemma \ref{pmh}
\begin{align*}
\epsilon_3(m)
=&\sum_{l=m^2}^\infty \log^2 l \, \mathbb{P}_m(H_1=l)
=\sum_{l=m^2}^\infty \sum_{h=1}^\infty p_{m,h}q_{h,l} \log^2 l \nonumber\\
\leq& \sum_{l=m^2}^\infty \log^2 l \left[\sum_{h \le \sqrt m l^\delta}    p_{m,h}q_{h,l} + \sum_{h> \sqrt m l^\delta}  p_{m,h} \right]\nonumber\\
\leq & \sum_{l=m^2}^\infty \log^2 l \left[\sum_{h \le \sqrt m l^\delta}\mathcal{O}\left(\frac{1}{\sqrt{m}}\frac{ h}{ l^{3/2}}\right)  + \mathcal{O}\left(e^{-cl^{2\delta}}\right) \right]\nonumber\\
=& \sum_{l=m^2}^\infty \log^2 l \, \mathcal{O}\left(\frac{\sqrt{m}}{ l^{3/2-2\delta}}\right) =\mathcal{O}\left( \frac{\log^2 m}{ m^{1/2-4\delta}}\right),
\end{align*}
where for $h > \sqrt{m}l^\delta$, we apply Chernoff bounds by viewing $p_{m,h}$ as the sum of $m$ many i.i.d random variables, each of which is distributed as the convolution of geometrically many Bernoulli distributions.
\end{proof}


\subsection{Further consequence} \label{sec:further}

Finally, we shall prove Theorem \ref{tail} using the results in previous sections.
For $x>0$, let $\lambda_x := \min\{n \ge 0; H_n \le x\}$.

\begin{lemma} \label{lambda}
For any $s \in [0,1/2)$, there exist constants $x_0$ and $c$ such that for $x \ge x_0$, we have $\mathbb{E}(\lambda_x^s) \le c\, \mathbb E\log H_0^{2s}$.
\end{lemma}
\begin{proof}
Note that the key estimate \eqref{key estimate} can be done for $\log^\alpha H_1$ with any $0<\alpha<1$. Then the lemma follows from the proof of \cite[Corollary 2.7.3]{Popov}.
\end{proof}



\begin{lemma}\label{max}
For any $\epsilon>0$, there exists $x_0$ such that for $x \ge x_0$, we have $$ \mathbb P\left(\max_{n \ge 0}\, \log H_{n\wedge \lambda_x} \ge y\right) \le \frac{\mathbb E (\log H_0)^{1-\epsilon}}{y^{1-\epsilon}}$$
and if $H_0 > x$ a.s.
$$P\left(\max_{n \ge 0}\, \log H_{n\wedge \lambda_x} \ge y\right) \ge \frac{c}{y^{1+\epsilon}},$$
where the constant $c>0$ only depends on $x$.
\end{lemma}

\begin{proof}
The upper bound follows by applying \cite[Corollary 2.4.6]{Popov} to $\log^\alpha H_1$ with $0 <\alpha <1$. For the lower bound, note that a similar estimate as \eqref{key estimate} implies the process $(\log H_{n\wedge\lambda_x})^\alpha$ is a submartingale for $1<\alpha<2$ and sufficiently large $x$. Then the lower bound follows from an application of optional stopping theorem, see e.g. Example 2.4.15 in \cite{Popov}.
\end{proof}

\begin{proof}[Proof of Theorem \ref{tail}]
Let $(X_i,Y_i)_{i \ge0}$ be the simple random walk on $\mathbb{G}_2$. For any $x>0$, let $\tau_x := \min\{i \ge 0; |X_i| \le x, Y_i=0\}$.

We claim that for any $\epsilon>0$, there exist a large enough $x$ and $c_1, c_2 > 0$ such that if $|X_0|>x$ and $Y_0=0$, then $$c_1 (\log k)^{-1-\epsilon} \le \mathbb P(\tau_x > k) \le c_2\, \mathbb E(\log |X_0|)^{1-\epsilon} (\log k)^{-1+\epsilon}.$$
If the claim is true, then a straightforward argument shows that for any $\epsilon>0$, there exist $c_1,c_2>0$ such that $$c_1 (\log k)^{-1-\epsilon} \le \mathbb P_0(\tau_0^+ > k) \le c_2 (\log k)^{-1+\epsilon},$$
which proves Theorem \ref{tail}.

To prove the claim, define $\tau_x^*$ to be the number of horizontal steps taken before $\tau_x$. Since $\tau_x^*$ concentrates around $\frac{1}{3}\tau_x$, it suffices to prove the same bound for the tail of $\tau_x^*$.
Analogous to \eqref{mu_n}, we have $$\max_{n \ge 0} H_{n\wedge \lambda_x} \le \tau_x^\ast \le 2\lambda_x \cdot \max_{n \ge 0} H_{n\wedge \lambda_x}.$$
Thus by Lemmas \ref{lambda} and \ref{max}, for any $\epsilon>0$, there exists a large enough $x$ and $c_2>0$ such that
\begin{align*}
\mathbb{P}(\tau_x^\ast>k) &\le \mathbb{P}\left(\lambda_x > \log^2 k\right) + \mathbb{P}\left(\max_{n \ge 0} H_{n\wedge \lambda_x} > \frac{k}{\log^2 k}\right)\\
 &\le c_2\, \mathbb E(\log |X_0|)^{1-\epsilon} (\log k)^{-1+\epsilon}.
\end{align*}
This proves the upper bound of the claim. The proof of the lower bound is similar.
\end{proof}




\subsection*{Acknowledgement}

We would like to thank anonymous referees whose comments have greatly improved this manuscript. In particular, one referee found out that the normalizing constant in Theorem \ref{windWt} has a very simple form. Another referee pointed out the important reference \cite{bremont20} to us. We are grateful to both of them. We also thank Krzysztof Burdzy and Christopher Hoffman for useful conversations. GB is supported by a Marco Polo grant from University of Bologna. YH is partially supported by a Gloria Hewitt Fellowship and a McFarlan Fellowship from University of Washington.

\appendix
\section{Local limit theorems for $p_{m,h}$ and $q_{h,l}$} \label{app:llt}

Throughout this section we shall denote the usual one-dimensional simple random walk on $\mathbb Z$ by $S$. First, we prove the local limit theorem for $p_{m,h}$.

\vspace{.1in}

Our approach is based on the fact that conditioned on the number of vertical steps before hitting the $y$-axis, the vertical movement has the same law as $S$. To calculate the probability of $n$ vertical steps, we hope to interpret the number of vertical steps before hitting $y$-axis as the sum of $m$ many i.i.d. geometric random variables $G_{p,m} := \sum_{i=1}^m g_i$ with success probability $p=1/3$ and support in $\{0,1,2,\dots\}$. The intuition is almost correct except that on graph $\mathbb G_2$, only vertical steps are allowed at ordinate zero. For this reason, we modify the transition probability of $S$ by ignoring the origin as follows: $p(1,-1)=p(1,2)=1/2$ and $p(-1,1)=p(-1,-2)=1/2,$ and write $S'$ for the resulting random walk. We also consider a 2D modification, the random walk $(X_i',Y_i')_{i\ge0}$ on an oriented graph $\mathbb G_2'$ where all the horizontal edges are to the right and all points on $x$-axis are ignored. Precisely, $\mathbb G_2'=(\mathbb V',\mathbb E_2')$ has vertex set $\mathbb V'=\mathbb Z^2\setminus \mathbb Z \times \{0\}$, and $\mathbb E_2'$ consists of all edges leading to the nearest neighbors upward, downward and to the right. Then the intuition of geometric random variables holds for the random walk on $\mathbb G_2'$, with the caveat that the conditional law of vertical movements has the same law as $S'$. For the process $(X_i,|Y_i|)_{i\ge0}$ with $y$-coordinate taking absolute value, define $p'_{m,h}$ analogously as the probability that the random walk started at $(-m,1)$ hits the $y$-axis at point $(0,h)$ for $m,h\in \mathbb Z_+$. Then
	\begin{align*}
	&p_{m,h} = p'_{m,h}
	=\sum_{\substack{n=h}}^\infty \left(\mathbb{P}_1(S'_n=-h) + \mathbb{P}_1(S'_n=h)\right) \mathbb{P}(G_{p,m}=n)\\
	=&\sum_{\substack{n=h}}^\infty \mathbb{P}_0(S_n=-h) \mathbb{P}(G_{p,m}=n) + \sum_{\substack{n=h}}^\infty \mathbb{P}_0(S_n=h-1) \mathbb{P}(G_{p,m}=n)=: p^{(1)}_{m,h} + p^{(2)}_{m,h}.
	\end{align*}
	We will focus on $p^{(1)}_{m,h}$, as $p^{(2)}_{m,h}$ can be treated analogously.
	Letting $\delta>0$, we split the sum into two parts
	\begin{align*}
	p^{(1)}_{m,h}=& \sum_{\substack{|n-2m| \leq m^{1/2 +\delta}}} \mathbb{P}_0(S_n=h) \mathbb{P}(G_{p,m}=n) +\mathcal{O}\left[\sum_{\substack{|n-2m| > m^{1/2 +\delta}}} \mathbb{P}(G_{p,m}=n) \right],
	\end{align*}
	and notice that the second term in the above display decays exponentially fast by Chernoff bound. Then, by applying the local limit theorem (see e.g. \cite{Lawler}, p.36 \footnote{This LLT and the following ones are stated for aperiodic random walks, but it is not difficult to deduce the analogue for bipartite walks, see e.g. pp. 26-27 of the cited book.}) to $S$ we obtain
	\begin{align}
	p^{(1)}_{m,h}=&\sum_{\substack{|n-2m| \leq m^{1/2 +\delta}}} \left[ \overline p_{n}(h) + \mathcal{O}\left(\frac{1}{m^{3/2}} \right)\right]\mathbb{P}(G_{p,m}=n)+ \mathcal{O}(e^{-cm^{2\delta}}) \nonumber\\
	=& \left[ \overline p_{2m}(h) + \mathcal{O}\left(\frac{1}{m^{3/2}}+\frac{e^{-\frac{h^2}{8m}}}{m^{1-\delta} } \right)\right] \sum_{\substack{|n-2m| \leq m^{1/2 +\delta}}} \mathbb{P}(G_{p,m}=n) + \mathcal{O}(e^{-cm^{2\delta}}) \nonumber \\
	=&  \left[ \overline p_{2m}(h) + \mathcal{O}\left(\frac{1}{m^{3/2}}+\frac{e^{-\frac{h^2}{8m}}}{m^{1-\delta} } \right)\right], \nonumber
	\end{align}
	where we define $\overline p_n(h):=\frac{1}{\sqrt{2\pi n}}e^{-\frac{h^2}{2n}}$ and use the first-order approximation $\overline p_{n}(h) = \overline p_{2m}(h) + \mathcal{O}\left(\frac{e^{-\frac{h^2}{8m}}}{m^{1-\delta} } \right)$ for $|n- 2m|\leq m^{1/2 + \delta}$. We conclude by noting that the other bound with an error term $\frac{1}{\sqrt{m}h^2}$ follows from the same proof, together with a different LLT in \cite{Lawler}, eq. (2.4) on p.25.

\vspace{.1in}

Next we prove the local limit theorem for $q_{h,l}$.

Let $G_{p,n}:=\sum_{k=1}^n g_k,$ with $g_k$'s i.i.d. geometric random variables with success probability $p=2/3$ and values in $\{0,1,2...\}$. Decomposing and conditioning on the number of vertical steps $n$, we have
	\begin{align*}
	q_{h,l}=&\sum_{\substack{n=h}}^\infty \mathbb{P}_0(S_n=h; S_k>0, \forall 1\leq k\leq n) \mathbb{P}(G_{p,n}=l)\nonumber\\
	=& \sum_{\substack{n=h}}^\infty \frac{h}{n}\mathbb{P}_0(S_n=h) \mathbb{P}(G_{p,n}=l),
	\end{align*}
	by the Ballot Theorem, see e.g. \cite[Thm.4.3.2]{Durrett}.
	Now let $\delta>0$ and split the sum into two parts as follows
	\begin{align}\label{eq:split}
	\sum_{\substack{|n-2l| \leq l^{1/2 +\delta}}} \frac{h}{n}
	\mathbb{P}_0(S_{n}=h)\mathbb{P}(G_{p,n}=l)
	+& \mathcal{O}\left[\sum_{\substack{|n-2l| > l^{1/2 +\delta}}} \mathbb{P}(G_{p,n}=l) \right].
	\end{align}
	Notice that as $G_{p,n}$ has a negative binomial distribution,
	\begin{equation}
	\mathbb{P}(G_{p,n}=l) = \binom{n+l-1}{l} p^n \left(1-p\right)^{l} = \frac{n}{l}\mathbb{P}( G_{1-p,l}=n),
	\label{Lemma:YB}
	\end{equation}
	so for the second term of (\ref{eq:split}), we have
	\begin{align*}
	\sum_{\substack{|n-2l| > l^{1/2 +\delta}}}\mathbb{P}(G_{p,n}=l)
	=&\sum_{\substack{|n-2l| > l^{1/2 +\delta}}}\frac{n}{l}\mathbb{P}( G_{1-p,l}=n) \\
	\leq & \mathbb{E}\left[G_{1-p,l};|G_{1-p,l}- 2l|\geq l^{1/2 +\delta}\right]= \mathcal{O}(e^{-cl^{2\delta}}),
	\end{align*}
	for appropriate $c>0$ by the Chernoff bound. By (\ref{Lemma:YB}) again, we can rewrite the first term of (\ref{eq:split}) as
	$$
	\sum_{\substack{|n-2l| \leq l^{1/2 +\delta}}} \frac{h}{l}
	\mathbb{P}_0(S_{n}=h)\mathbb{P}(G_{1-p,l}=n)
	$$
	and apply the local limit theorems and first order approximation as before.

\section{Euler-Maclaurin approximation} \label{app:EMapprox}

In this section we will apply the Euler-Maclaurin formula to bound $I_3$ and $J_3$.

Recall that $f_m(x):=\frac{\log(x)}{\sqrt{\pi m}}e^{-\frac{x^2}{4m}}$ and $f_m'(x)=\left(\frac{1}{x} - \frac{x \log x}{2m}\right)\frac{1}{\sqrt{\pi m}}e^{-\frac{x^2}{4m}}$.
Hence, by the Euler-Maclaurin formula
\begin{align*}
I_3\leq & \sum_{k=0}^{(1/2+\delta)\log_2 m} \left[\sum_{h=2^k}^{2^{k+1}} f_m(h) - \int_{2^k}^{2^{k+1}} f_m(x)dx\right] \nonumber\\
= & \sum_{k=0}^{(1/2+\delta)\log_2 m} \left[ \frac{f_m(2^k)+f_m(2^{k+1})}{2} +  r_k \right]=\mathcal{O}\left(\frac{\log m}{m^{1/2-2\delta}}\right),
\end{align*}
where $r_k$ denotes the $k$-th error term and the last equality follows from
\begin{align*}
|r_k|\leq C2^k\max_{2^k\leq x \leq 2^{k+1}} |f'_m(x)| \leq& C2^k\max_{2^k\leq x \leq 2^{k+1}}  \left(\frac{1}{x} + \frac{x \log x}{2m}\right)\frac{1}{\sqrt{\pi m}}e^{-\frac{x^2}{4m}}\\
\leq & C 2^k \left(\frac{1}{2^k} + \frac{2^{k+1} (k+1)}{2m}\right)\frac{1}{\sqrt{\pi m}}\\
=& \mathcal{O}\left(\frac{1}{\sqrt{m}} + \frac{ 2^{2k} k}{m^{3/2}}\right).
\end{align*}

Let $g_h(x):=\log x\frac{ h}{2\sqrt{\pi} x^{3/2}} e^{-\frac{h^2}{4x}}$ and $g_h'(x)=\left(1 -\frac{3 \log x}{2} +\frac{h^2\log x}{4 x} \right)\frac{ h}{2\sqrt{ \pi} x^{5/2}} e^{-\frac{h^2}{4x}}$. By the Euler-Maclaurin formula,
\begin{align*}
J_3\leq & \sum_{k=(2-\delta)\log_2 h}^{\infty}  \left[\sum_{l=2^k}^{2^{k+1}} g_h(l) - \int_{2^k}^{2^{k+1}} g_h(x)dx\right] \leq  \sum_{k=(2-\delta)\log_2 h}^{\infty} \left[ \frac{g_h(2^k)+g_h(2^{k+1})}{2} + \tilde r_k \right] \nonumber \\
=& \sum_{k=(2-\delta)\log_2 h}^{\infty} \mathcal{O}\left(\frac{ h k}{2^{3k/2}} + \frac{h^3k}{2^{5k/2}} \right) = \mathcal{O}\left(\frac{\log h}{ h^{2-\frac{5\delta}{2}}}\right),
\end{align*}
where we use the fact that
\begin{align*}
|\tilde r_k|\leq &  C'2^k\left(1 +\frac{3  (k+1)}{2} +\frac{h^2 (k+1)}{2^{k+2} } \right)\frac{ h}{2\sqrt{ \pi} 2^{5k/2}}\\
=&\mathcal{O}\left(\frac{h k}{2^{3k/2}} + \frac{h^3k}{2^{5k/2}}\right).
\end{align*}


\bibliography{RevolvingRW}{}
\bibliographystyle{amsplain}

\end{document}